\begin{document}


\title{Formula for calculating the Wiener polarity index with applications to benzenoid graphs and phenylenes
}


\author{Niko Tratnik  
}

%
\institute{N. Tratnik \at
              Faculty of Natural Sciences and Mathematics, University of Maribor, Slovenia \\
              \email{niko.tratnik@um.si, niko.tratnik@gmail.com}           
}

\date{Received: \today / Accepted: date}

\maketitle

\begin{abstract}
The Wiener polarity index of a graph is defined as the number of unordered pairs of vertices at distance three. In recent years, this topological index was extensively studied since it has many known applications in chemistry and also in network theory. In this paper, we generalize the result of Behmaram, Yousefi-Azari, and Ashrafi proved in 2012 for calculating the Wiener polarity index of a graph. An important advantage of our generalization is that it can be used for graphs that contain $4$-cycles and also for graphs whose different cycles have more than one common edge.
In addition, using the main result a closed formula for the Wiener polarity index is derived for phenylenes and recalculated for catacondensed benzenoid graphs. The catacondensed benzenoid graphs and phenylenes attaining the extremal values with respect to the Wiener polarity index are also characterized.
\keywords{Wiener polarity index \and benzenoid graph \and phenylene \and first Zagreb index \and second Zagreb index}
 \subclass{92E10 \and 05C12 \and 05C90}
\end{abstract}

\section{Introduction and preliminaries}

Throughout the paper, all the considered graphs are simple, finite, and connected. For a graph $G$, $V(G)$ and $E(G)$ denote the sets of all the vertices and edges, respectively. The {\em distance} $d_G(x,y)$ between vertices $x$ and $y$ of a graph $G$ is the length of a shortest path between vertices $x$ and $y$ in $G$. We sometimes write $d(x,y)$ instead of $d_G(x,y)$. For a vertex $x$, we will denote by $N(x)$ the set of all the vertices in $G$ that are adjacent to $x$. The \textit{degree} of $x$, denoted as $\deg_G(x)$ or shortly $\deg(x)$, is the cardinality of the set $N(x)$. For any $k \geq 3$, a cycle on $k$ vertices will be called a \textit{$k$-cycle}. Moreover, the set of all $k$-cycles in $G$ is denoted as $C_k(G)$. A cycle or a path in graph $G$ will be often denoted as the sequence of its vertices. In addition, a \textit{star graph} $S_k$, $k \geq 1$, is a complete bipartite graph $K_{1,k}$.

The Wiener polarity index is a molecular descriptor defined as the number of unordered pairs of vertices at distance three. More precisely, for a graph $G$, the \textit{Wiener polarity index} of $G$, denoted by $W_p(G)$, is defined as
$$W_p(G) = | \lbrace \lbrace u,v \rbrace \subseteq V(G) \, | \, d(u,v)=3 \rbrace |.$$
The Wiener polarity index was introduced in \cite{Wiener}, where H.\,Wiener used this index and the classical Wiener index to predict boiling points of paraffins. Later, Lukovits and Linert \cite{luko} demonstrated quantitative structure-property relationships for the Wiener polarity index in a series of acyclic and cycle-containing hydrocarbons. Furthermore, Hosoya and Gao analysed very good correlation of this index with liquid density \cite{hosoya}. In the same work, remarkably good correlations with refractive index and molar volume are also mentioned. In recent years, the Wiener polarity index was mostly studied on trees \cite{ali,ashrafi,deng,deng1,lei,liu2} and also on unicyclic graphs \cite{hou,liu}. Moreover, the Nordhaus-Gaddum-type results for this index were considered \cite{hua,zhang} and the index was used to study properties of networks \cite{lei1}. Also, in \cite{tratnik} a cut method for the Wiener polarity index was developed for nanotubes and benzenoid graphs that can be embedded into the regular hexagonal lattice.  Some other recent investigations on the Wiener polarity index can be found in \cite{hua1,ilic,il-il}. 

The Zagreb indices have been introduced in 1972 by Gutman and Trinajsti\' c \cite{gut_tri}.  For a graph $G$ with at least one edge, the \textit{first Zagreb index}, $M_1(G)$, and the \textit{second Zagreb index}, $M_2(G)$, are defined as
\begin{eqnarray*}
M_1(G) & = & \sum_{v \in V(G)} \deg(v)^2, \\
M_2(G) & = & \sum_{e=uv \in E(G)} \deg(u) \cdot \deg(v).
\end{eqnarray*}
If $G$ has no edges, we set $M_1(G)=M_2(G)=0$. Since the introduction, many mathematical and chemical properties were found for these topological indices. For example, it is easy to check that 
\begin{equation} \label{zag1}
M_1(G) = \sum_{e=uv \in E(G)} (\deg(u) + \deg(v)).
\end{equation}

In \cite{beh} the Wiener polarity index was expressed by using the Zagreb indices and the following theorem was proved.

\begin{theorem} \cite{beh} \label{behas}
Suppose $G$ is a connected triangle- and quadrangle-free graph such that its different cycles have at most one common
edge. If $N_p = N_p(G)$ and $N_h = N_h(G)$ denote the number of pentagons and hexagons of $G$, then $W_p(G) = M_2(G) - M_1(G) -
5N_p - 3N_h + |E(G)|$.
\end{theorem}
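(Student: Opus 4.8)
The plan is to count the length-three paths of $G$ (copies of $P_4$) in two ways and compare with $W_p(G)$. First I would fix such a path $u\,a\,b\,v$ by its \emph{middle} edge $ab$: its ends are an arbitrary $u\in N(a)\setminus\{b\}$ and an arbitrary $v\in N(b)\setminus\{a\}$. Since $G$ is triangle-free, $a$ and $b$ have no common neighbour, so $u\neq v$ is automatic and every such choice genuinely produces a $P_4$; as each $P_4$ has a unique middle edge, it is counted exactly once. Hence the number of length-three paths is $p_3(G)=\sum_{ab\in E(G)}(\deg(a)-1)(\deg(b)-1)$, which expands, using \eqref{zag1}, to $M_2(G)-M_1(G)+|E(G)|$. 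The theorem therefore reduces to showing $p_3(G)-W_p(G)=5N_p+3N_h$.

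To establish this I would classify each length-three path by the distance $d(u,v)$ between its endpoints, which can only be $1$, $2$, or $3$. The value $1$ is impossible: together with the edge $uv$ the path would close into a $4$-cycle, contradicting quadrangle-freeness. Writing $p(u,v)$ for the number of length-three paths joining $u$ and $v$, we get $p_3(G)=\sum_{d(u,v)=3}p(u,v)+\sum_{d(u,v)=2}p(u,v)$, and since $W_p(G)=|\{\,\{u,v\}:d(u,v)=3\,\}|$, it suffices to prove $\sum_{d(u,v)=3}\bigl(p(u,v)-1\bigr)=3N_h$ and $\sum_{d(u,v)=2}p(u,v)=5N_p$.

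For the distance-two contribution I would first note that quadrangle-freeness forces a distance-two pair $\{u,v\}$ to have a \emph{unique} common neighbour $w$ (two would yield a $4$-cycle), and that $w\notin\{a,b\}$ (otherwise a triangle appears). Then $u\,a\,b\,v\,w$ is a pentagon, and this gives a bijection between length-three paths joining distance-two pairs and pairs $(\text{pentagon},\text{non-adjacent vertex pair})$: each pentagon has exactly five non-adjacent pairs, each recovered as the corresponding length-three arc, for a total of $5N_p$. For the distance-three contribution I would show that such a pair has at most two geodesics: two distinct ones must be internally disjoint (a shared internal vertex produces either a $4$-cycle or a length-two shortcut contradicting $d=3$), hence form a hexagon, while three of them would force two distinct hexagons sharing a three-edge path, violating the at-most-one-common-edge hypothesis. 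Conversely, every hexagon contributes precisely its three antipodal pairs, each kept at distance three by the same hypothesis (a shortcut would create two pentagons sharing two edges). This yields excess $3N_h$, and combining the three counts gives the formula.

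The main obstacle I expect is the distance-three analysis: proving the sharp bound of at most two geodesics per pair and confirming that every hexagon's antipodal pairs are genuinely at distance three. These are exactly the points where all three structural hypotheses (triangle-free, quadrangle-free, and cycles sharing at most one common edge) must be invoked together, and care is needed to ensure that the correspondence between distance-three pairs with two geodesics and hexagons is a clean bijection rather than an over- or under-count.
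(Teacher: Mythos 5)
Your proposal is correct and follows essentially the same strategy the paper uses (in its proof of the generalized Theorem~\ref{glavni}, of which the cited Theorem~\ref{behas} is the quadrangle-free special case): count the $P_4$'s via $p_3(G)=\sum_{ab\in E(G)}(\deg(a)-1)(\deg(b)-1)=M_2(G)-M_1(G)+|E(G)|$, then correct for endpoint pairs at distance two (five per pentagon) and for the duplicate geodesics coming from hexagons (three per hexagon). Your verifications that two distinct length-three paths between a distance-three pair are internally disjoint and that the cycle-sharing hypothesis forbids a third such path or a shortcut across a hexagon are exactly the points the paper's argument also relies on, so no gap remains.
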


However, many chemical graphs contain $4$-cycles and consequently, Theorem \ref{behas} can not be used in such cases. Therefore, in Theorem \ref{glavni} we generalize this result such that it can be used also for graphs that contain $4$-cycles. Moreover, in our generalization the number of common edges of different cycles is limited only for small cycles.

To show the application of the main result, in the final section we derive a closed formula for the Wiener polarity index of phenylenes, which are important and extensively studied molecular graphs that contain $4$-cycles. For catacondensed benzenoid graphs similar formula was obtained in \cite{beh}, but it does not coincide with our computations. Therefore, we compute the Wiener polarity index also for these graphs. Moreover, the graphs attaining the extremal values with respect to the Wiener polarity index are characterized.

\section{Formula for calculating the Wiener polarity index}

In this section we prove the main result of the paper and it enables us to easily compute the Wiener polarity index for a large class of graphs. We start with two definitions.

\begin{definition} Let $G$ be a graph and $C$ a $4$-cycle in $G$. An edge $e$ \textbf{exits} $C$ if $e$ and $C$ have exactly one common vertex. See Figure \ref{exit}. 
\end{definition}

\begin{figure}[h!] 
\begin{center}
\includegraphics[scale=0.7]{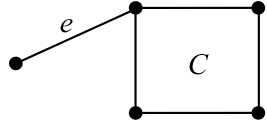}
\end{center}
\caption{\label{exit} A cycle $C$ and an edge $e$ such that $e$ exits $C$.}
\end{figure}

\noindent
Obviously, if $G$ does not contain $3$-cycles, then for a given $4$-cycle $C=u_1,u_2,u_3,u_4,u_1$ the number of edges exiting $C$ is exactly $\deg(u_1) + \deg(u_2)+  \deg(u_3)+ \deg(u_4) - 8$.
\begin{definition}
Let $G$ be a graph without 3-cycles. If $G$ has at least one $4$-cycle, we define

$$f(G) = \sum_{C=u_1,u_2,u_3,u_4,u_1 \in C_4(G)} (\deg(u_1) + \deg(u_2)+ \deg(u_3)+ \deg(u_4) - 8).$$
Otherwise, $f(G)=0.$
\end{definition}

\noindent
It is easy to notice that $f(G)$ is exactly the number of ordered pairs $(C,e)$, where $C$ is a cycle of $G$ and $e$ exits $C$. Now we are able to state the main result of the paper. In the proof, vertices $u,v$ of a 6-cycle $C$ are called \textit{diametrically opposite} if $d_C(u,v)=3$.

\begin{theorem}
\label{glavni}
Let $G$ be a connected graph without $3$-cycles. Moreover, suppose that any two distinct cycles $C_1,C_2 \in C_4(G) \cup C_5(G) \cup C_6(G)$ have at most two common edges and any two cycles $C_1,C_2 \in C_4(G)$ have at most one common edge. Then it holds
$$W_p(G) = M_2(G) - M_1(G) - f(G) - 4|C_4(G)| - 5|C_5(G)| - 3|C_6(G)| + |E(G)|.$$
\end{theorem}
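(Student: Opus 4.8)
My plan is to turn the whole quantity into a count of paths of length three and then correct for the ways such a path can fail to certify a new pair at distance three. The starting point is the identity
$$M_2(G) - M_1(G) + |E(G)| = \sum_{uv \in E(G)} (\deg(u)-1)(\deg(v)-1),$$
which is immediate from (\ref{zag1}) and the definition of $M_2$. I would read the right-hand side combinatorially: for a fixed edge $uv$ the product $(\deg(u)-1)(\deg(v)-1)$ counts the walks $x-u-v-y$ with $x \in N(u)\setminus\{v\}$ and $y \in N(v)\setminus\{u\}$. Since $G$ has no $3$-cycles, the four vertices $x,u,v,y$ are pairwise distinct in each such walk (an equality $x=y$ would produce a triangle $xuv$), so every walk counted is a genuine path on four vertices, and conversely each such path is recorded exactly once, at its middle edge. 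Hence $M_2(G)-M_1(G)+|E(G)|$ equals the number $P_3(G)$ of length-three paths in $G$.

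Next I would sort these paths by the distance between their endpoints, writing $P_3(G)=T_1+T_2+T_3$, where $T_i$ counts the length-three paths whose endpoints are at distance $i$ (distance $\ge 4$ being impossible along a path of length three). Writing $p(u,v)$ for the number of length-three paths joining a given pair, every pair at distance three is joined by $p(u,v)\ge 1$ geodesics, so $T_3 = W_p(G)+R$ with $R=\sum_{d(u,v)=3}(p(u,v)-1)$. The problem then reduces to the three evaluations $T_1=4|C_4(G)|$, $T_2=5|C_5(G)|$ and $R=f(G)+3|C_6(G)|$, whereupon $W_p(G)=P_3(G)-T_1-T_2-R$ is exactly the claimed formula. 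The first two are short counts keyed to a small cycle. A path $a-b-c-d$ with $a$ adjacent to $d$ closes to the $4$-cycle $a,b,c,d,a$, and each $4$-cycle arises from exactly four such paths (one per deleted edge), giving $T_1=4|C_4(G)|$. For $T_2$, I would double count the pairs $(P,m)$ with $P=a-b-c-d$ a path whose endpoints lie at distance two and $m$ a common neighbour of $a,d$; triangle-freeness puts $m$ off $P$ and closes $P$ into the $5$-cycle $a,b,c,d,m,a$, so these pairs biject with the $5|C_5(G)|$ incidences (5-cycle, one of its five length-three subpaths). The hypothesis that two cycles from $C_4(G)\cup C_5(G)\cup C_6(G)$ share at most two edges forces each distance-two pair to have a \emph{unique} common neighbour — two common neighbours would give two $5$-cycles sharing the three edges of $P$ — so the double count collapses to $T_2=5|C_5(G)|$.

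The evaluation of $R$ is the heart of the matter and the step I expect to be most delicate. For a pair at distance three, its geodesics correspond to the edges of the bipartite graph between the neighbours of $u$ at distance two from $v$ and the neighbours of $v$ at distance two from $u$. I would run a short case analysis on any three such edges: they always yield either two $6$-cycles sharing three edges, a $6$-cycle and a $4$-cycle sharing three edges, or two $4$-cycles sharing two edges — each excluded by the hypotheses. Hence $p(u,v)\le 2$, so $R$ counts exactly the distance-three pairs with two geodesics. Two internally disjoint geodesics span a $6$-cycle on which the pair is diametrically opposite; the hypotheses make every $6$-cycle chordless and free of external shortcuts (a chord or shortcut would create two small cycles meeting in three edges), so its three diametric pairs each lie at distance three and contribute, giving the term $3|C_6(G)|$. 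Two geodesics sharing an edge instead form a $4$-cycle with the shared edge exiting it, and I would verify that this gives a bijection with the incidences counted by $f(G)$: an edge exiting a corner always has its opposite corner at distance three, since distance two would create two $5$-cycles sharing three edges and distance one two $4$-cycles sharing two edges.

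Assembling $R=f(G)+3|C_6(G)|$ with the values of $T_1$ and $T_2$ then yields the formula. The crux throughout is the verification flagged above: that the ``at most two common edges'' and ``at most one common edge'' hypotheses annihilate precisely the degenerate configurations — multiple common neighbours of a distance-two pair, chords or shortcuts of a $6$-cycle, geodesic multiplicity beyond two, and $4$-cycle exits landing at distance one or two — each of which would otherwise break one of the three bijections. I expect the bookkeeping of the $f(G)$ bijection, keeping exits, corners, and the induced cherries straight, to be the most error-prone part, and it is where I would spend the most care.
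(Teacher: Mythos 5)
Your proposal is correct and follows essentially the same route as the paper: both identify $M_2(G)-M_1(G)+|E(G)|$ with the number of length-three paths and then use the common-edge hypotheses to charge the surplus to $4$-cycles (distance-one endpoints), $5$-cycles (distance-two endpoints), and to $6$-cycles together with exiting edges of $4$-cycles (pairs with two geodesics). The only difference is bookkeeping --- you split the paths by endpoint distance before correcting for geodesic multiplicity, whereas the paper corrects for multiplicity over all pairs first and then discards the close pairs; the two orders agree because pairs at distance one or two admit a unique length-three path.
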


\begin{proof}
If $G$ is a star graph or a graph with only one vertex, it can be easily checked that the theorem holds. Therefore, in the rest of the proof we assume that $G$ has more than one vertex and that $G$ is not a star graph. Let $e=xy \in E(G)$ be an edge with $\deg(x) > 1$ and $\deg(y)>1$. Moreover, let $u,v \in V(G) \setminus \lbrace x,y \rbrace$ be two distinct vertices such that $ux \in E(G)$ and $vy \in E(G)$. Obviously, $P=u,x,y,v$ is a path of length three in $G$, see Figure \ref{potP}.

\begin{figure}[h!] 
\begin{center}
\includegraphics[scale=0.7]{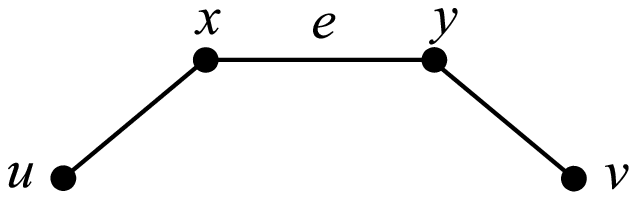}
\end{center}
\caption{\label{potP} Path $P$.}
\end{figure}

We define
$$p_3(G) = \sum_{e=xy \in E(G)}(\deg(x)-1)(\deg(y)-1).$$
\end{proof}

\noindent
One can notice that $p_3(G)$ is exactly the number of all the paths of length three in $G$. By a simple calculation we obtain

\begin{eqnarray*}
p_3(G) & = & \sum_{e=xy \in E(G)} \Big[ \deg(x)\deg(y) - (\deg(x) + \deg(y))+ 1) \Big] \\
 & = & \sum_{e=xy \in E(G)}\deg(x)\deg(y) - \sum_{e=xy \in E(G)}(\deg(x) + \deg(y)) + \sum_{e=xy \in E(G)} 1 \\
 & = & M_2(G) - M_1(G) + |E(G)|,
\end{eqnarray*}
where the last equality follows by Equation \eqref{zag1}.

However, for two vertices $u,v \in V(G)$ there may exists more than one path of length three between them. Suppose that there is some other path (distinct from $P$) $P'=u,a,b,v$ of length three between $u$ and $v$. Consider the following options:
\begin{itemize}
\item [(a)] $x \notin V(P')$ and $y \notin V(P')$: \\
In this case, $u$ and $v$ lie on a $6$-cycle $x,u,a,b,v,y,x$, see Figure \ref{sita}. Moreover, this is the only $6$-cycle in $G$ containing $u$ and $v$ as diametrically opposite vertices (otherwise we have two distinct $6$-cycles with three edges in common).

\begin{figure}[h!] 
\begin{center}
\includegraphics[scale=0.7]{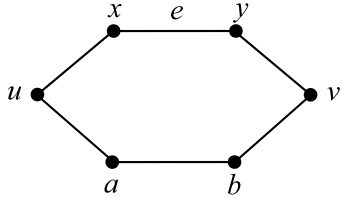}
\end{center}
\caption{\label{sita} Paths $P$ and $P'$ is case (a).}
\end{figure}

\item [(b)] $x=a$ and $y \notin V(P')$: \\
In this case, we obtain a $4$-cycle $C=x,b,v,y,x$ such that the edge $xu$ exits $C$, see Figure \ref{sitb}. Moreover, $P'$ is the only such path (otherwise we obtain two distinct $4$-cycles with two edges in common).

\begin{figure}[h!] 
\begin{center}
\includegraphics[scale=0.7]{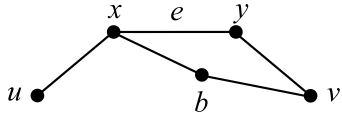}
\end{center}
\caption{\label{sitb} Paths $P$ and $P'$ in case (b).}
\end{figure}

 \item [(c)] $y=b$ and $x \notin V(P')$: \\
In this case, we obtain a $4$-cycle $C=u,x,y,a,u$ such that  the edge $yv$ exits $C$. Moreover, $P'$ is the only such path (otherwise we obtain two distinct $4$-cycles with two edges in common).
\end{itemize}

\noindent
Next we show that cases (a) and (b) or (a) and (c) can not happen simultaneously. Suppose that there is a path $P'=u,x,b,v$ of length $3$ between $u$ and $v$ such that $y\neq b$ (case (b)). Moreover, let $P''=u,a',b',v$ be another path between $u$ and $v$ such that $x \notin V(P'')$ and $y \notin V(P'')$ (case (a)).
\begin{itemize}
\item If $b'=b$, then we obtain a $4$-cycle $x,y,v,b,x$ and a $6$-cycle $u,x,y,v,b,a',u$ with three common edges - a contradiction.
\item If $b'\neq b$, then we obtain a $6$-cycle $u,a',b',v,y,x,u$ and a $6$-cycle $u,a',b',v,b,x',u$ with three common edges - a contradiction.
\end{itemize}

\noindent
Since we get a contradiction in every case, (a) and (b) can not both happen. In a similar way we show that (a) and (c) can not both happen. However, cases (b) and (c) can both happen.

On the other hand, any $6$-cycle contains exactly three pairs of diametrically opposite vertices and every $4$-cycle $u_1,u_2,u_3,u_4,u_1$ has exactly $\deg(u_1) + \deg(u_2)+\deg(u_3)+\deg(u_4) - 8$ exiting edges. Therefore,
$$p_3(G) - f(G) - 3|C_6(G)|$$
represents the number of unordered pairs of distinct vertices $u,v$ for which there exists a path of length three between them.

However, it can happen that $d(u,v) < 3$. In such a case we consider two possibilities:
\begin{itemize}
\item [$(i)$] $d(u,v)=1$: \\
In this case, $u$ and $v$ lie on a $4$-cycle $C=x,y,v,u,x$, see Figure \ref{siti}. Obviously, it holds $d_C(u,v)=1$ and $C$ is the only $4$-cycle with such property (otherwise we obtain $4$-cycles with two common edges or a 4-cycle and a $6$-cycle with three common edges).

\begin{figure}[h!] 
\begin{center}
\includegraphics[scale=0.7]{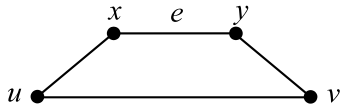}
\end{center}
\caption{\label{siti} $4$-cycle in case $(i)$.}
\end{figure}

\item [$(ii)$] $d(u,v)=2$: \\
In this case, $u$ and $v$ lie on a $5$-cycle $C=x,y,v,a,u,x$ such that $a \in V(G) \setminus \lbrace x,y,u,v \rbrace$, see Figure \ref{sitii}. Obviously, it holds $d_C(u,v)=2$ and $C$ it the only $5$-cycle with such property (otherwise we obtain 5-cycles with three common edges or a 5-cycle and a 6-cycle with three common edges).

\begin{figure}[h!] 
\begin{center}
\includegraphics[scale=0.7]{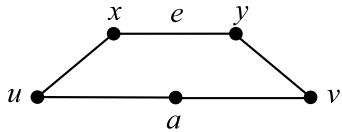}
\end{center}
\caption{\label{sitii} $5$-cycle in case $(ii)$.}
\end{figure}

\end{itemize}

On the other hand, for every $4$-cycle there are exactly four pairs of vertices at distance one and for every $5$-cycle there are exactly five pairs of vertices at distance two. Therefore,
$$p_3(G) - f(G) - 3|C_6(G)| - 4|C_4(G)| - 5|C_5(G)|$$
represents the number of unordered pairs of vertices at distance three, which is exactly the Wiener polarity index of $G$. This completes the proof. \qed

\section{Catacondensed benzenoid graphs and phenylenes}

In this section we apply the main result of the paper to find explicit formulas for the Wiener polarity index of catacondensed benzenoid graphs and phenylenes. First, we need to introduce some additional definitions and notation.

In the existing (both mathematical and chemical) literature, there is inconsistency in the terminology pertaining to (what we call
here) ``benzenoid graph". In this paper, a \textit{benzenoid graph} is a 2-connected plane graph in which the boundary of any inner face is a 6-cycle (and any 6-cycle is the boundary of some inner face), such that two
6-cycles are either disjoint or have exactly one common edge, and no three 6-cycles share a common edge. 

However, in some literature it is assumed that a benzenoid graph can be embedded into the regular hexagonal lattice \cite{gucy-89}. Let $G$ be a benzenoid graph. Any inner face of $G$ is called a \textit{hexagon} of $G$ and the number of all the hexagons of $G$ is denoted by $h(G)$. A vertex shared by three hexagons of $G$ is called an \textit{internal} vertex of $G$. The number of all internal vertices of $G$ will be denoted by $n_i(G)$. A benzenoid graph is said to be \textit{catacondensed} if it does not possess internal vertices. Otherwise it is called \textit{pericondensed}.

We say that two faces of a plane graph are \textit{adjacent} if they have at least one edge in common. The \textit{dualist graph} of a given benzenoid graph $G$ consists of vertices corresponding to hexagons of $G$; two vertices are adjacent if and only if the corresponding hexagons are adjacent. Obviously, the dualist graph of $G$ is a tree if and only if $G$ is catacondensed. For a catacondensed benzenoid graph $G$, its dualistic tree has $h(G)$ vertices and none of its vertices have degree greater than three, see Figure \ref{benzenoid}.

\begin{figure}[h!] 
\begin{center}
\includegraphics[scale=0.6]{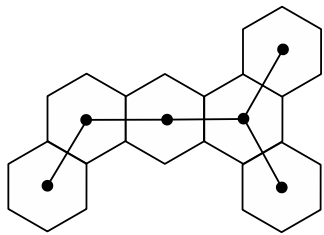}
\end{center}
\caption{\label{benzenoid} A catacondensed benzenoid graph $G$ with its dualistic tree.}
\end{figure}

Let $G$ be a catacondensed benzenoid graph with at least two hexagons. If we add \textit{quadrilaterals} (faces whose boundary is a $4$-cycle) between all pairs of adjacent hexagons of $G$, the obtained graph $G'$ is called a \textit{phenylene}, see Figure \ref{phenylene}. We then say that $G$ is the \textit{hexagonal squeeze} of $G'$. Obviously, graph $G$ in Figure \ref{benzenoid} is the hexagonal squeeze of phenylene $G'$ from Figure \ref{phenylene}. For a phenylene $G'$ the \textit{dualist tree} of $G'$ is defined as the dualist tree of its hexagonal squeeze. Moreover, a \textit{hexagon} of $G'$ is any inner face of $G'$ whose boundary is a $6$-cycle.

\begin{figure}[h!] 
\begin{center}
\includegraphics[scale=0.6]{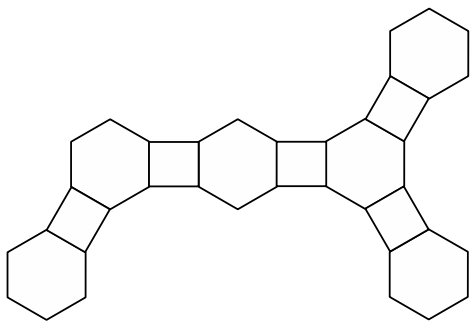}
\end{center}
\caption{\label{phenylene} A phenylene $G'$.}
\end{figure}

Let $G$ be a catacondensed benzenoid graph or a phenylene and let $h_0$ be a hexagon of $G$. Hexagon $h_0$ is called \textit{terminal} if it is adjacent to exactly one other inner face of $G$ and it is called \textit{branched} if it is adjacent to exactly three other inner faces of $G$. If $h_0$ is adjacent to exactly two other inner faces, it has two vertices of degree two. Then $h_0$ is called \textit{angular} if these two vertices are adjacent and \textit{linear} it they are not adjacent. The number of terminal, branched, angular, and linear hexagons of $G$ will be denoted by $t(G)$, $b(G)$, $a(G)$, and $l(G)$, respectively. Moreover, a catacondensed benzenoid graph or a phenylene is called a \textit{linear chain} if it does not contain angular or branched hexagons. A \textit{segment} in a catacondensed benzenoid graph or a phenylene $G$ is any maximal linear chain which is a subgraph of $G$. Finally, the number of all the segments of $G$ will be denoted by $s(G)$. Obviously, for the graph from Figure \ref{benzenoid} it holds $t(G)=3$, $b(G)=a(G)=l(G)=1$, $s(G)= 4$ and the same is true for the graph from Figure \ref{phenylene}. 

For any $k \geq 0$, the number of vertices of a graph $G$ which have degree $k$ will be denoted by $n_k(G)$. We can now state the following proposition.

\begin{proposition} \cite{gut}
\label{gut-ben}
If $G$ is a benzenoid graph, then $|V(G)|  =  4h(G) + 2 - n_i(G)$, 
$ |E(G)|  =  5h(G) + 1 - n_i(G)$, $n_3(G)  =  2h(G) - 2$, and $n_2(G)  =  2h(G) + 4 - n_i(G)$.

\end{proposition}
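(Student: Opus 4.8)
The plan is to derive all four identities from two ingredients: Euler's formula for the plane graph $G$, together with a careful count of how the vertices sit on the hexagons. First I would record the structural facts forced by the definition of a benzenoid graph. Every vertex has degree $2$ or $3$: a vertex $v$ of degree at least $4$ would have at least three hexagons around it (at most one of the faces at $v$ is the outer face), and among them one can always find two hexagons that are not consecutive around $v$; such a pair shares the vertex $v$ but no edge incident to $v$, so their intersection is neither empty nor a single edge, contradicting the hypothesis that two distinct hexagons are either disjoint or share exactly one common edge. Using $2$-connectivity (so that the outer face cannot appear twice among the faces around a single vertex) one then pins down the exact correspondence between degree and hexagon membership: a degree-$2$ vertex lies on exactly one hexagon (its two incident edges cannot both belong to two different hexagons, for otherwise those hexagons would share two edges), a non-internal degree-$3$ vertex lies on exactly two hexagons, and an internal vertex lies on exactly three. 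In particular $|V(G)| = n_2(G) + n_3(G)$.

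Next I would apply Euler's formula. Since $G$ has $h(G)$ inner faces plus the outer face, $|V(G)| - |E(G)| + (h(G)+1) = 2$, which gives
\begin{equation*}
|E(G)| = |V(G)| + h(G) - 1 .
\end{equation*}
Combining this with the handshake identity $2 n_2(G) + 3 n_3(G) = 2|E(G)|$ and with $n_2(G) + n_3(G) = |V(G)|$ immediately yields $n_3(G) = 2\bigl(|E(G)| - |V(G)|\bigr) = 2h(G) - 2$. This is the third formula, and notably it needs no information about internal vertices.

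To bring $n_i(G)$ into play I would count incidences between vertices and hexagons. Summing the six vertices over all $h(G)$ hexagons gives $6h(G)$, while on the other side each degree-$2$ vertex is counted once, each non-internal degree-$3$ vertex twice, and each internal vertex three times, by the classification above. Hence
\begin{equation*}
6h(G) = n_2(G) + 2\bigl(n_3(G) - n_i(G)\bigr) + 3 n_i(G) = n_2(G) + 2 n_3(G) + n_i(G) .
\end{equation*}
Substituting $n_3(G) = 2h(G) - 2$ solves for $n_2(G) = 2h(G) + 4 - n_i(G)$, the fourth formula. Then $|V(G)| = n_2(G) + n_3(G) = 4h(G) + 2 - n_i(G)$ gives the first formula, and feeding this into the Euler relation gives $|E(G)| = 5h(G) + 1 - n_i(G)$, the second.

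I expect the only genuinely delicate part to be the opening structural step, namely the claim that every vertex has degree at most $3$ and the precise matching between a vertex's degree and the number of hexagons through it. Everything after that is a short elimination among the four counting relations. The degree bound and the membership correspondence are exactly where the planar embedding and the hypothesis ``two hexagons share at most one edge'' must be used, together with $2$-connectivity to exclude repeated appearances of the outer face; once these combinatorial–geometric facts are established, the algebra is routine.
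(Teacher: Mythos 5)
Your proof is correct. Note, however, that the paper does not actually prove Proposition \ref{gut-ben}: it cites \cite{gut} and merely remarks that Gutman's proof, given there for benzenoid graphs embeddable in the regular hexagonal lattice, carries over to the abstract definition used in this paper. Your argument is the standard Euler-formula derivation, and its value is precisely that it is carried out directly from the paper's abstract definition ($2$-connectedness, hexagonal inner faces, the intersection condition), thereby substantiating the paper's Remark rather than merely repeating a lattice-based argument. The structural preamble is the only delicate part and you handle it correctly: in a $2$-connected plane graph a vertex of degree $d$ lies on $d$ distinct faces, at most one of which is the outer face, so a vertex of degree at least $4$ forces two hexagons meeting in that vertex but in no edge incident to it; and a degree-$2$ vertex cannot lie on two hexagons since both of its incident edges lie on the boundary of each of its two faces. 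One small point worth making explicit: you are reading ``two $6$-cycles are either disjoint or have exactly one common edge'' as saying that the intersection of two hexagons is either empty or a single edge together with its endpoints; that is the intended reading, and it is what disposes of the residual case in which your two non-consecutive hexagons around $v$ happen to share an edge far from $v$ in addition to the vertex $v$ itself. After that, Euler's formula $|V(G)|-|E(G)|+(h(G)+1)=2$, the handshake identity, and the vertex--hexagon incidence count $6h(G)=n_2(G)+2n_3(G)+n_i(G)$ determine all four quantities exactly as you compute.
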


\begin{remark}
In \cite{gut} Proposition \ref{gut-ben} was proved just for benzenoid  graphs that can be embedded into the regular hexagonal lattice. However, the same proof works for any benzenoid graph.
\end{remark}

\noindent
Using Proposition \ref{gut-ben} it is possible to derive similar formulas also for phenylenes.

\begin{proposition} 
\label{phen}
If $G$ is a phenylene, then $|V(G)|  =  6h(G)$, $|E(G)|  =  8h(G) - 2$, $n_3(G) =  4h(G) - 4$, and $n_2(G)  =  2h(G) + 4$.

\end{proposition}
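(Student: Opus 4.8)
The plan is to relate the phenylene $G$ to its hexagonal squeeze $H$, which is a catacondensed benzenoid graph, and then to invoke Proposition~\ref{gut-ben}. Since $H$ is catacondensed we have $n_i(H)=0$, so writing $h=h(G)=h(H)$ Proposition~\ref{gut-ben} immediately yields $|V(H)|=4h+2$, $|E(H)|=5h+1$, $n_3(H)=2h-2$, and $n_2(H)=2h+4$. The whole argument then reduces to bookkeeping: I must track exactly how each of these four quantities changes when $G$ is built from $H$ by inserting the quadrilaterals.

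First I would count the inserted quadrilaterals. Each quadrilateral sits between a pair of adjacent hexagons, so their number equals the number of adjacent hexagon pairs of $H$, which is precisely the number of edges of the dualist tree of $H$. As this tree has $h$ vertices, it has $h-1$ edges, so exactly $h-1$ quadrilaterals are inserted. Next I would analyse a single insertion locally. In $H$ a pair of adjacent hexagons shares an edge $uv$, whose endpoints $u,v$ both have degree three. Forming the phenylene splits $u$ into two vertices $u_1,u_2$ and $v$ into $v_1,v_2$, one copy going to each of the two hexagons, and inserts the new square $u_1,v_1,v_2,u_2,u_1$. A direct inspection of this local picture shows that each insertion adds $2$ vertices and $3$ edges (the shared edge $uv$ becomes the two edges $u_1v_1$ and $u_2v_2$, while the two new square edges $u_1u_2$ and $v_1v_2$ appear), turns the two degree-three vertices $u,v$ into four degree-three vertices $u_1,u_2,v_1,v_2$ (a net gain of $2$ in $n_3$), and creates no new vertex of degree two, since every other vertex merely has one incident edge reattached.

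Multiplying these per-insertion changes by $h-1$ and adding them to the squeeze counts would then give $|V(G)|=(4h+2)+2(h-1)=6h$, $|E(G)|=(5h+1)+3(h-1)=8h-2$, and $n_3(G)=(2h-2)+2(h-1)=4h-4$; the remaining value $n_2(G)=2h+4$ follows either from $n_2(G)=|V(G)|-n_3(G)$ or from the fact that the degree-two vertices of $H$ are left untouched. The step I expect to require the most care is justifying that the $h-1$ local modifications are genuinely independent, i.e.\ that no vertex is split twice and no edge is affected by two insertions. This is exactly where the catacondensed hypothesis enters: if two shared edges of $H$ were adjacent, their common endpoint would lie on three hexagons and hence be an internal vertex, contradicting $n_i(H)=0$. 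Thus the shared edges are pairwise non-adjacent, each degree-three vertex of $H$ is an endpoint of a unique shared edge, the splits do not interfere, and the per-insertion changes may simply be summed to obtain the four formulas.
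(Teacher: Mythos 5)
Your proposal is correct and follows essentially the same route as the paper: pass to the hexagonal squeeze, apply Proposition~\ref{gut-ben} with $n_i=0$, and account for the $h-1$ inserted quadrilaterals contributing $2$ vertices and $3$ edges each while leaving $n_2$ unchanged. The only difference is that you spell out the local degree bookkeeping and the non-interference of the insertions (via the catacondensed hypothesis), which the paper leaves implicit.
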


\begin{proof}
Let $G'$ be the hexagonal squeeze of $G$. Since $h(G)=h(G')$ and $G$ has exactly $h(G)-1$ quadrilaterals, $|V(G)| = |V(G')| + 2(h(G)-1)$ and $|E(G)|= |E(G')| + 3(h(G)-1)$. Moreover, $G$ and $G'$ have the same number of vertices of degree two. Therefore, the result follows by Proposition \ref{gut-ben}. \qed
\end{proof}

\noindent
In the next lemma, we derive relations between the numbers of different types of hexagons and the number of segments.

\begin{lemma} \label{pomoc}
If $G$ is a catacondensed benzenoid graph or a phenylene, then it holds $2b(G) + a(G) = s(G) -1$ and $b(G) + 2 = t(G)$.
\end{lemma}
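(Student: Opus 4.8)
The plan is to reduce both statements to a double-counting argument on the dualist tree $T$ of $G$, which has exactly $h = h(G)$ vertices and, as noted before the lemma, maximum degree three. First I would observe that the quantities $t(G), b(G), a(G), l(G)$ and $s(G)$ depend only on $T$ together with the angular/linear classification of the degree-two hexagons, and that this data is identical for a catacondensed benzenoid graph and for a phenylene (since, by definition, the dualist tree of a phenylene is that of its hexagonal squeeze, and the relative position of the connecting edges --- hence the angular/linear type --- is preserved). Thus it suffices to argue in terms of $T$. Under this identification, the terminal hexagons correspond exactly to the vertices of degree one, the branched hexagons to the vertices of degree three, and the angular and linear hexagons together to the vertices of degree two; I may assume $h \ge 2$, so that $T$ has no isolated vertex.

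For the second identity $b(G) + 2 = t(G)$, I would simply combine the vertex count with the handshake relation. Since $T$ is a tree on $h$ vertices it has $h-1$ edges, so $\sum_{v} \deg_T(v) = 2(h-1)$. Writing the left-hand side in terms of the degree classes gives $t + 2(a+l) + 3b = 2(h-1)$, while counting vertices gives $t + (a+l) + b = h$. Subtracting twice the second relation from the first eliminates the degree-two vertices and yields $b = t - 2$, which is the claim.

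For the first identity $2b(G) + a(G) = s(G) - 1$, I would count in two ways the incidences between segments and their endpoint hexagons. Each segment is a maximal linear chain, so its two end hexagons are precisely the hexagons at which the chain cannot be continued in a straight line, i.e.\ the non-linear hexagons (terminal, angular, or branched); the linear hexagons lie strictly inside a unique segment and contribute no endpoints. Counting by segments gives exactly $2s$ such incidences. Counting by hexagons, a terminal hexagon is the end of exactly one segment, an angular hexagon is the common end of the two segments meeting at its kink, and a branched hexagon is the common end of the three segments emanating from it; hence the same count equals $t + 2a + 3b$. Equating the two counts and substituting $t = b+2$ from the previous paragraph gives $2s = 4b + 2a + 2$, that is $2b + a = s - 1$.

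The routine handshake computation is immediate; the main obstacle is the justification of the endpoint counts in the second part, namely that the local structure at each hexagon type forces exactly the stated number of incident segments. Concretely, I need that a segment passes straight through every linear hexagon (so linear hexagons are interior), that the chain must terminate at an angular hexagon on each of its two sides, and that each of the three chains through a branched hexagon ends there. This is where the precise definitions of linear versus angular (via adjacency of the two degree-two vertices) and of a segment as a maximal linear chain are used, and where care is needed to confirm that the classification and the segment structure transfer correctly between a phenylene and its hexagonal squeeze.
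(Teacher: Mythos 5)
Your proof is correct and is essentially the paper's argument: the paper contracts the linear hexagons of the dualist tree to form a reduced tree $T'$ whose vertices are the terminal, angular, and branched hexagons and whose edges are the segments, then combines the tree relation $|V(T')|=|E(T')|+1$ with the handshake lemma on $T'$; your segment--endpoint incidence count $t+2a+3b=2s$ is exactly that handshake relation in disguise, and your degree count on $T$ replaces the Euler relation. Both routes reduce to the same two linear equations, so this is the same approach with only cosmetic repackaging.
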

\begin{proof}
Let $T$ be a dualistic tree of $G$. For any $v \in V(T)$ with $\deg(v)=2$ and $N(v)=\lbrace a, b \rbrace$, let $T_v$ be a tree obtained from $T$ by removing $v$, $va$, $vb$ and adding an edge $ab$. In such a case, we say that $T_v$ is obtained from $T$ by \textit{skipping} vertex $v$. Moreover, let $T'$ be a tree obtained from $T$ by successively skipping all the vertices corresponding to the linear hexagons of $G$. Obviously, $|V(T')| = b(G) + a(G) + t(G)$ and $|E(T')| = s(G)$. Since $T'$ is a tree, we obtain
\begin{equation}
\label{en1}
b(G) + a(G) + t(G) = s(G) +1.
\end{equation}
Moreover, since the sum of all the degrees of vertices in $T'$ is equal to $2|E(T')|$ (by the handshaking lemma) one can conclude
\begin{equation}
\label{en2}
3b(G) + 2a(G) + t(G) = 2s(G).
\end{equation}

\noindent
Finally, the results follow from \eqref{en1} and \eqref{en2}. \qed
\end{proof}

\noindent 
In the next lemma, formulas for the first Zagreb index are given.
\begin{lemma} \label{firzag1}
For a catacondensed benzenoid graph $G_1$ and a phenylene $G_2$ it holds $M_1(G_1)= 26h(G_1)-2$ and $M_1(G_2)=44h(G_2)-20 $.
\end{lemma}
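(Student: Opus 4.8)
The plan is to exploit the fact that both catacondensed benzenoid graphs and phenylenes are graphs in which every vertex has degree either two or three, so the first Zagreb index collapses to a weighted count of these two vertex types. First I would observe that since $M_1(G) = \sum_{v \in V(G)} \deg(v)^2$ and each vertex has degree $2$ or $3$, we may write
$$M_1(G) = 4\,n_2(G) + 9\,n_3(G).$$
The whole computation then reduces to inserting the known expressions for $n_2(G)$ and $n_3(G)$.

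For the catacondensed benzenoid graph $G_1$, I would note that being catacondensed means $n_i(G_1) = 0$ by definition. Hence Proposition \ref{gut-ben} gives $n_3(G_1) = 2h(G_1) - 2$ and $n_2(G_1) = 2h(G_1) + 4$. Substituting into the formula above yields
$$M_1(G_1) = 4\bigl(2h(G_1) + 4\bigr) + 9\bigl(2h(G_1) - 2\bigr) = 26h(G_1) - 2,$$
as claimed. For the phenylene $G_2$, I would instead invoke Proposition \ref{phen}, which gives $n_3(G_2) = 4h(G_2) - 4$ and $n_2(G_2) = 2h(G_2) + 4$, and substitute:
$$M_1(G_2) = 4\bigl(2h(G_2) + 4\bigr) + 9\bigl(4h(G_2) - 4\bigr) = 44h(G_2) - 20.$$

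There is essentially no serious obstacle here: the argument is a direct substitution once the degree restriction is established. The only point requiring a moment of care is the justification that no vertex of a catacondensed benzenoid graph or a phenylene has degree greater than three (and none has degree one, by $2$-connectivity), which is what permits the clean identity $M_1 = 4n_2 + 9n_3$; this follows immediately from the structure of these graphs, where each vertex lies on the boundary of at most three inner faces. After that, the result is a two-line arithmetic verification in each of the two cases, relying entirely on Propositions \ref{gut-ben} and \ref{phen}.
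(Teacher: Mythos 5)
Your proof is correct and follows essentially the same approach as the paper: the paper computes $M_1(G_2)=9(4h(G_2)-4)+4(2h(G_2)+4)$ exactly as you do, and merely cites the literature for the catacondensed benzenoid case instead of redoing the (identical) substitution with the vertex counts from Proposition \ref{gut-ben}. Your direct verification of the $G_1$ case is a harmless and self-contained replacement for that citation.
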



\begin{proof}
The proof for catacondensed benzenoid graph $G_1$ can be found in \cite{beh,doslic}. By Proposition \ref{phen}, $G_2$ has $4h(G_2)-4$ vertices of degree three and $2h(G_2)+4$ vertices of degree two. Therefore,
$$M_1(G_2) = 9(4h(G_2)-4) + 4(2h(G_2)+4) = 44h(G_2) - 20$$
and the proof is complete. \qed
\end{proof}

\noindent 
In the following proposition we obtain formulas for the second Zagreb index with respect to the number of hexagons, the number of segments, and the number of branched hexagons. Note that for catacondensed benzenoid graphs a similar formula was obtained in \cite{doslic}. In addition, for benzenoid graphs this index can be expressed also by using inlets (for the details see Proposition 5.5 in \cite{gut2}).

\begin{proposition} \label{second_zagreb}
If $G_1$ is a catacondensed benzenoid graph and $G_2$ a phenylene, then it holds
\begin{eqnarray*}
M_2(G_1) & = & 33h(G_1) + s(G_1) + b(G_1) - 10, \\
M_2(G_2) & = & 60h(G_2) + s(G_2) + b(G_2) - 37.
\end{eqnarray*}
\end{proposition}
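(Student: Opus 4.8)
The plan is to reduce the computation of $M_2$ to counting a single quantity --- the number of edges of $G$ whose endpoints both have degree three --- and then to evaluate that quantity combinatorially from the hexagon structure.

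First I would record that by Propositions \ref{gut-ben} and \ref{phen} every vertex of a catacondensed benzenoid graph or of a phenylene has degree $2$ or $3$. Writing $m_{ij}$ for the number of edges joining a vertex of degree $i$ to a vertex of degree $j$, we have $M_2(G) = 4m_{22} + 6m_{23} + 9m_{33}$. Counting the edge-endpoints incident to degree-two and to degree-three vertices gives $2m_{22} + m_{23} = 2n_2(G)$ and $m_{23} + 2m_{33} = 3n_3(G)$, while $m_{22}+m_{23}+m_{33}=|E(G)|$. Eliminating $m_{22}$ and $m_{23}$ (the relations being consistent by the handshaking lemma, since $2|E(G)| = 2n_2(G) + 3n_3(G)$) yields the clean identity
\[
M_2(G) = 4|E(G)| + 6\,n_3(G) + m_{33}.
\]
Thus everything reduces to determining $m_{33}$.

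Next comes the heart of the argument: counting $m_{33}$. For a catacondensed benzenoid graph the degree-three vertices are exactly the endpoints of the $h(G)-1$ edges shared by adjacent hexagons (the edges of the dualist tree), and each such shared edge is itself a $(3,3)$-edge. Every remaining $(3,3)$-edge lies on the boundary of a single hexagon, so I would go through the hexagon types: a terminal or linear hexagon contributes no extra $(3,3)$-edge, an angular hexagon contributes exactly one (the edge joining the two degree-three vertices lying between its two fused edges), and a branched hexagon contributes exactly three. Since a non-shared boundary edge belongs to only one hexagon there is no double counting, whence $m_{33} = (h(G)-1) + a(G) + 3b(G)$. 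For a phenylene I would argue analogously via its hexagonal squeeze: the degree-three vertices are precisely the four corners of the $h(G)-1$ inserted quadrilaterals, these quadrilaterals are pairwise vertex-disjoint (the count $n_3(G)=4h(G)-4=4(h(G)-1)$ forces this), and all four edges of each quadrilateral are $(3,3)$-edges, giving $4(h(G)-1)$ of them; the extra $(3,3)$-edges from hexagon boundaries arise exactly as before, one per angular and three per branched hexagon, so $m_{33} = 4(h(G)-1) + a(G) + 3b(G)$.

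Finally I would apply Lemma \ref{pomoc}, which gives $a(G) + 3b(G) = \big(2b(G)+a(G)\big) + b(G) = s(G) + b(G) - 1$, to rewrite the two expressions as $m_{33} = h + s + b - 2$ (benzenoid) and $m_{33} = 4h + s + b - 5$ (phenylene). Substituting these, together with $|E(G)|$ and $n_3(G)$ from Propositions \ref{gut-ben} (with $n_i(G)=0$) and \ref{phen}, into the identity for $M_2$ and collecting terms produces exactly $33h(G_1) + s(G_1) + b(G_1) - 10$ and $60h(G_2) + s(G_2) + b(G_2) - 37$. The main obstacle is the combinatorial bookkeeping in the $m_{33}$ count: one must verify carefully that every $(3,3)$-edge is counted exactly once --- in particular that the only non-fused $(3,3)$-edges are the angular and branched contributions --- and, for phenylenes, that the inserted quadrilaterals neither share vertices nor create degree-three vertices off the squares.
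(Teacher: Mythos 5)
Your proof is correct, and it takes a somewhat different route from the paper's. The paper computes $M_2$ directly as a weighted sum over inner faces: it records, for each of the four hexagon types, the full profile of edges of type $(2,2)$, $(2,3)$ and $(3,3)$, assigns each hexagon the total contribution of its non-shared edges ($24$ for terminal and linear, $25$ for angular, $27$ for branched), and then adds $9$ for each of the $h-1$ shared edges (respectively $36$ for each quadrilateral in the phenylene case), finishing with Lemma \ref{pomoc} exactly as you do. You instead first establish the reduction identity $M_2(G)=4|E(G)|+6n_3(G)+m_{33}$, valid for any graph with all degrees in $\{2,3\}$, which lets you ignore the $(2,2)$- and $(2,3)$-edges entirely and count only $m_{33}$; the hexagon-type analysis you then need (no extra $(3,3)$-edge for terminal or linear hexagons, one for angular, three for branched, plus the fused edges or quadrilateral edges) is precisely the type-$3$ column of the paper's table. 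Your version buys a lighter bookkeeping burden --- only one edge class to track, with $|E|$ and $n_3$ already supplied by Propositions \ref{gut-ben} and \ref{phen} --- at the cost of the small preliminary elimination argument; the paper's version is more self-contained but requires verifying three edge counts per hexagon type instead of one. Your disjointness justifications (no internal vertices forces the fused edges to be vertex-disjoint; the count $n_3=4(h-1)$ forces the quadrilaterals to be vertex-disjoint) are sound, and the final arithmetic in both cases checks out.
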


\begin{proof}
In this proof, an edge $e=xy$ will be called an edge of \textit{type 1} if $x$ and $y$ both have degree two. Moreover, if one of $x$ and $y$ has degree two and the other one has degree three, we say that $e$ is of \textit{type 2}. Finally, $e$ is an edge of \textit{type 3} if $x$ and $y$ both have degree three.  

We notice that a terminal hexagon of $G_1$ or $G_2$ has three edges of type 1, two edges of type 2, and one edge of type 3. A linear hexagon has four edges of type 2 and two edges of type 3. An angular hexagon has one edge of type 1, two edges of type 2, and three edges of type 3. Finally, a branched hexagon has six edges of type 3.

If $G_1$ has only one hexagon, the result obviously holds. Otherwise, in a catacondensed benzenoid graph $G_1$ there are $h(G_1)-1$ edges of type 3 that appear on two different hexagons. Therefore, by counting contributions of hexagons and contributions of edges that belong to two hexagons, we get
\begin{eqnarray*}
M_2(G_1) & = & 24t(G_1) + 24l(G_1) + 25a(G_1) + 27b(G_1) + 9(h(G_1)-1) \\
& = & 24h(G_1) + a(G_1) + 3b(G_1) + 9h(G_1) - 9 \\
& = & 33h(G_1) + s(G_1) + b(G_1) - 10,
\end{eqnarray*}
where the last equality follows from Lemma \ref{pomoc}.

In a phenylene $G_2$ there are $4(h(G_1)-1)$ edges of type 3 that appear on a quadrilateral. Therefore, by counting contributions of hexagons and quadrilaterals we get
\begin{eqnarray*}
M_2(G_2) & = & 24t(G_2) + 24l(G_2) + 25a(G_2) + 27b(G_2) + 36(h(G_2)-1) 
\\ & = & 60h(G_2) + s(G_2) + b(G_2) - 37,
\end{eqnarray*}
where the computation is done in a similar way as before. This completes the proof. \qed
\end{proof}

\noindent
The following theorem is the main result of this section.

\begin{theorem} \label{Wie_pol}
If $G_1$ is a catacondensed benzenoid graph and $G_2$ a phenylene, then it holds
\begin{eqnarray*}
W_p(G_1) & = & 9h(G_1) + s(G_1) + b(G_1) - 7, \\
W_p(G_2) & = & 13h(G_2) + s(G_2) + b(G_2) - 11.
\end{eqnarray*}
\end{theorem}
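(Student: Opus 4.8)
The plan is to apply the general formula from Theorem~\ref{glavni} to each graph class separately, reducing everything to quantities we have already computed. First I would verify that both catacondensed benzenoid graphs and phenylenes satisfy the hypotheses of Theorem~\ref{glavni}: they are connected, contain no $3$-cycles, and by the definitions given (two hexagons share at most one edge, no three hexagons share an edge) the constraints on common edges of small cycles hold. This lets me write
\begin{equation*}
W_p(G) = M_2(G) - M_1(G) - f(G) - 4|C_4(G)| - 5|C_5(G)| - 3|C_6(G)| + |E(G)|
\end{equation*}
for each graph, so the task becomes computing each term as a linear expression in $h$, $s$, and $b$.

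For the catacondensed benzenoid graph $G_1$ I would note that it contains no $4$-cycles and no $5$-cycles, so $f(G_1)=0$, $|C_4(G_1)|=0$, and $|C_5(G_1)|=0$; the only relevant short cycles are the hexagonal faces, giving $|C_6(G_1)| = h(G_1)$. (One should check that every $6$-cycle is a face, which follows from the catacondensed structure.) Substituting $M_1(G_1)=26h(G_1)-2$ from Lemma~\ref{firzag1}, $M_2(G_1)=33h(G_1)+s(G_1)+b(G_1)-10$ from Proposition~\ref{second_zagreb}, and $|E(G_1)|=5h(G_1)+1-n_i(G_1)=5h(G_1)+1$ from Proposition~\ref{gut-ben} (with $n_i=0$), the arithmetic collapses to $W_p(G_1)=9h(G_1)+s(G_1)+b(G_1)-7$.

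For the phenylene $G_2$ the term requiring care is $f(G_2)$ together with the cycle counts. Here the $4$-cycles are exactly the $h(G_2)-1$ quadrilaterals, so $|C_4(G_2)|=h(G_2)-1$, while $|C_5(G_2)|=0$ and the $6$-cycles are the $h(G_2)$ hexagonal faces, so $|C_6(G_2)|=h(G_2)$. The main obstacle is evaluating $f(G_2)$, since it depends on the degrees of the four vertices of each quadrilateral; I would argue that in a phenylene each quadrilateral has all four vertices of degree three (they are the fusion vertices shared with adjacent hexagons), so each contributes $\deg\text{-sum}-8 = 12-8 = 4$, giving $f(G_2)=4(h(G_2)-1)$. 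Then substituting $M_1(G_2)=44h(G_2)-20$, $M_2(G_2)=60h(G_2)+s(G_2)+b(G_2)-37$, and $|E(G_2)|=8h(G_2)-2$ from the earlier results, and combining $-f(G_2)-4|C_4(G_2)| = -8(h(G_2)-1)$ with the $-3|C_6(G_2)|=-3h(G_2)$ term, the computation reduces to $W_p(G_2)=13h(G_2)+s(G_2)+b(G_2)-11$.

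The conceptual work is therefore front-loaded into the cycle bookkeeping — confirming which short cycles exist in each family and computing $f$ — after which everything is a routine substitution of the formulas from Lemma~\ref{firzag1}, Proposition~\ref{second_zagreb}, Proposition~\ref{gut-ben}, and Proposition~\ref{phen}. The subtlest point to justify carefully is that in each class every $6$-cycle is a hexagonal face (and in the phenylene every $4$-cycle is a quadrilateral), so that no spurious cycles inflate the counts; this is where the structural definitions of the two graph families must be invoked explicitly.
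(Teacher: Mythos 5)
Your proposal is correct and follows the same route as the paper: verify the hypotheses of Theorem~\ref{glavni}, determine the cycle counts $|C_4|$, $|C_5|$, $|C_6|$ and the value of $f$ for each family (in particular $f(G_2)=4(h(G_2)-1)$ since every quadrilateral has four degree-three vertices), and substitute the formulas from Proposition~\ref{gut-ben}, Proposition~\ref{phen}, Lemma~\ref{firzag1}, and Proposition~\ref{second_zagreb}. You in fact supply more of the arithmetic and the justification that no spurious short cycles exist than the paper does, which simply asserts these counts.
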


\begin{proof}
Since $G_1$ does not contain $4$-cycles and $5$-cycles, we have $f(G_1)=|C_4(G_1)|=|C_5(G_1)|=0$ and $|C_6(G_1)| = h(G_1)$. On the other hand, for $G_2$ we have $|C_4(G_2)|=h(G_2)-1$, $|C_5(G_2)|=0$, $f(G_2) = 4h(G_2)-4$, and $|C_6(G_2)|=h(G_2)$. Also, it is easy to check that benzenoid graphs and phenylenes suffice the conditions of Theorem \ref{glavni}. Therefore, the desired results follow by Theorem \ref{glavni}, Proposition \ref{gut-ben}, Proposition \ref{phen}, Lemma \ref{firzag1}, and Proposition \ref{second_zagreb}. \qed
\end{proof}

To show one example, let $G$ be a benzenoid graph from Figure \ref{benzenoid} and $G'$ a phenylene from Figure \ref{phenylene}. As already noticed, $h(G)=h(G')=6$, $b(G)=b(G')=1$, and $s(G)=s(G')= 4$. Therefore, by Theorem \ref{Wie_pol} we compute $W_p(G) = 52$ and $W_p(G') = 72$.
\smallskip

In the rest of the section we will characterize the catacondensed benzenoid graphs and phenylenes with exactly $h$ hexagons for which the Wiener polarity index attains the minimum and the maximum value.  Note that the catacondensed benzenoid graphs attaining the lower bound coincide with the graphs obtained in \cite{beh}. However, the result for the maximum value requires some additional insights.

\begin{proposition} 
If $G$ is a catacondensed benzenoid graph or a phenylene with exactly $h$ hexagons, we denote by $L_h$ the linear benzenoid chain or the linear phenylene chain, respectively, that also contains $h$ hexagons. Then $W_p(G) \geq W_p(L_h)$ and the equality holds if and only if $G \cong L_h$.
\end{proposition}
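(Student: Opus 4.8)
The plan is to reduce the whole statement to a single elementary inequality about the quantity $s(G)+b(G)$. By Theorem \ref{Wie_pol}, a catacondensed benzenoid graph satisfies $W_p(G)=9h(G)+s(G)+b(G)-7$ and a phenylene satisfies $W_p(G)=13h(G)+s(G)+b(G)-11$. In either setting the number of hexagons is fixed at $h$ and enters with a constant coefficient, so comparing $W_p(G)$ against $W_p(L_h)$ is literally the same as comparing $s(G)+b(G)$ against $s(L_h)+b(L_h)$. Hence it suffices to prove $s(G)+b(G)\geq s(L_h)+b(L_h)$, with equality exactly when $G\cong L_h$.

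First I would evaluate the benchmark $s(L_h)+b(L_h)$. By definition a linear chain contains no angular and no branched hexagons, so $a(L_h)=b(L_h)=0$, and being itself one maximal linear chain it has $s(L_h)=1$. Thus $s(L_h)+b(L_h)=1$.

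Next I would invoke Lemma \ref{pomoc}, which gives $2b(G)+a(G)=s(G)-1$, equivalently $s(G)=2b(G)+a(G)+1$. Substituting this into $s(G)+b(G)$ yields the clean identity $s(G)+b(G)=3b(G)+a(G)+1$. Since $b(G)\geq 0$ and $a(G)\geq 0$ hold for every such graph, I immediately get $s(G)+b(G)\geq 1=s(L_h)+b(L_h)$, and therefore $W_p(G)\geq W_p(L_h)$.

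For the equality case, the same identity shows $s(G)+b(G)=1$ precisely when $3b(G)+a(G)=0$, that is when $b(G)=a(G)=0$. A catacondensed benzenoid graph or phenylene with no branched and no angular hexagons is by definition a linear chain, and the linear chain on a prescribed number $h$ of hexagons is unique up to isomorphism, namely $L_h$. Hence equality holds if and only if $G\cong L_h$. I expect the only genuinely delicate point to be this equality direction: one must argue that the vanishing of $a(G)$ and $b(G)$ forces $G$ to be isomorphic to $L_h$ and not merely to share its segment statistics, which follows from the definition of a linear chain together with the uniqueness of the linear chain with $h$ hexagons. The inequality itself, by contrast, is immediate once the formula of Theorem \ref{Wie_pol} and the relation of Lemma \ref{pomoc} are in hand.
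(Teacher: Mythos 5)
Your proof is correct and follows essentially the same route as the paper: both reduce the claim via Theorem \ref{Wie_pol} to minimizing $s(G)+b(G)$, which the paper handles by noting directly that the minimum forces $s(G)=1$ and $b(G)=0$, i.e.\ a linear chain. Your extra step of substituting Lemma \ref{pomoc} to get $s(G)+b(G)=3b(G)+a(G)+1$ is a slightly more explicit justification of the same bound and equality case, but not a different argument.
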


\begin{proof}
By Theorem \ref{Wie_pol}, the Wiener polarity index attains its minimum value if and only if $s(G)=1$ and $b(G)=0$, which is true if and only if $G$ is a linear chain. \qed
\end{proof}

To find the graphs that attain the maximum value, we introduce special families of catacondensed benzenoid graphs and phenylenes. These families will be defined by the following extensions.

\begin{itemize}
\item [$(i)$] \textbf{Extension 1:} \textit{Let $G_0$ be a catacondensed benzenoid graph (or a phenylene) and let $h_0$ be an arbitrary terminal hexagon of $G_0$. We denote by $G$ a graph obtained from $G_0$ by attaching exactly two new hexagons to $h_0$ such that $h_0$ becomes a branched hexagon (and in the case of phenylenes, we also add two quadrilaterals between $h_0$ and the two new hexagons). See Figure \ref{dodas_dva}.}
\begin{figure}[h!] 
\begin{center}
\includegraphics[scale=0.6]{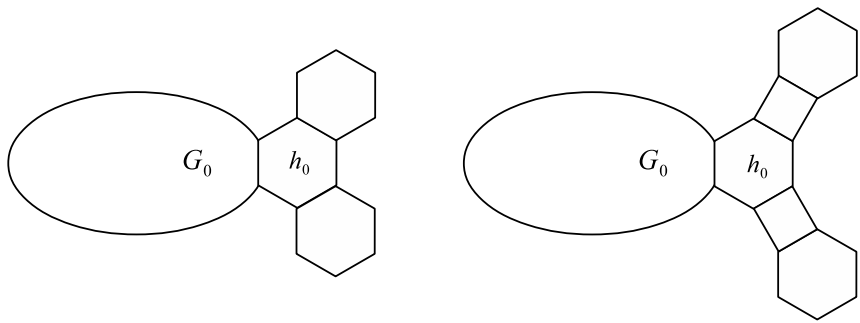}
\end{center}
\caption{\label{dodas_dva} A catacondensed benzenoid graph (or a phenylene) $G$ obtained from $G_0$ by Extension 1.}
\end{figure}
\item [$(ii)$]  \textbf{Extension 2:} \textit{Let $G_0$ be a catacondensed benzenoid graph (or a phenylene) and let $h_0$ be an arbitrary terminal hexagon of $G_0$. We denote by $G$ a graph obtained from $G_0$ by attaching exactly one new hexagon to $h_0$ such that $h_0$ becomes an angular hexagon (and in the case of phenylenes, we also add a quadrilateral between $h_0$ and the new hexagon). See Figure \ref{dodas_enega}.}
\begin{figure}[h!] 
\begin{center}
\includegraphics[scale=0.6]{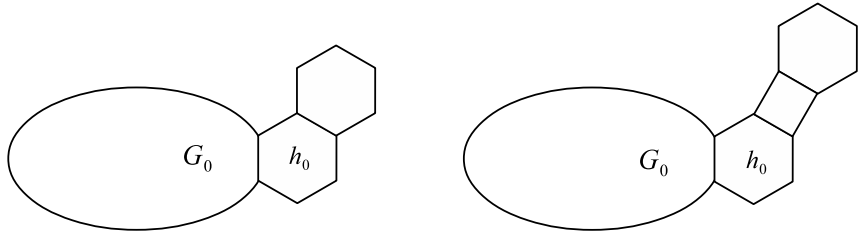}
\end{center}
\caption{\label{dodas_enega} A catacondensed benzenoid graph (or a phenylene) $G$ obtained from $G_0$ by Extension 2.}
\end{figure}

\end{itemize}

\noindent
For any $h \geq 2$, we define families ${\mathcal{B}}_h$ and ${\mathcal{P}}_h$ as follows.

\begin{itemize}
\item [$(i)$] If $h$ is an even number, then ${\mathcal{B}}_h$ (or ${\mathcal{P}}_h$) is the set of all the graphs that can be obtained from the catacondensed benzenoid graph with exactly two hexagons (or from the phenylene with exactly two hexagons) by performing Extension 1 exactly $\left(\frac{h}{2} - 1\right)$ times.

\item [$(ii)$] If $h$ is an odd number, then ${\mathcal{B}}_h$ (or ${\mathcal{P}}_h$) is the set of all the graphs that can be obtained from the catacondensed benzenoid graph with exactly two hexagons (or from the phenylene with exactly two hexagons) by performing Extension 1 exactly $\left(\frac{h-1}{2}-1\right)$ times and performing Extension 2 exactly once (in an arbitrary order).
\end{itemize}

\noindent
For an example, Figure \ref{graf_iz_druzine} shows a catacondensed benzenoid graph with nine hexagons that belongs to ${\mathcal{B}}_9$. Obviously, all the graphs in ${\mathcal{B}}_h \cup {\mathcal{P}}_h$, $h\geq 2$, have exactly $h$ hexagons.
\begin{figure}[h!] 
\begin{center}
\includegraphics[scale=0.6]{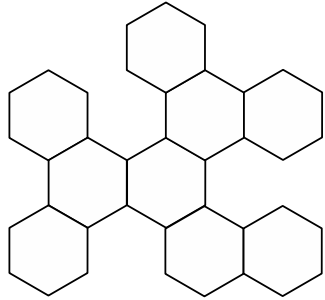}
\end{center}
\caption{\label{graf_iz_druzine} A graph from ${\mathcal{B}}_9$.}
\end{figure}

The next lemma gives an upper bound on the number of segments.

\begin{lemma} \label{st_segmentov}
For a catacondensed benzenoid graph or a phenylene $G$ with $h(G) \geq 2$ it holds $s(G) \leq h(G)-1$. Moreover, for all the graphs in ${\mathcal{B}}_h \cup{\mathcal{P}}_h$, $h \geq 2$, the equality holds. 
\end{lemma}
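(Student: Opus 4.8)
The plan is to reduce the inequality to a count of linear hexagons and then to settle the equality case by an induction along the two extensions. First I would observe that for $h(G) \geq 2$ every hexagon of $G$ is adjacent to exactly one, two, or three other inner faces, and hence is of exactly one of the four declared types. This gives the partition
$$h(G) = t(G) + l(G) + a(G) + b(G).$$
For a catacondensed benzenoid graph this is immediate, since its dualist graph is a tree of maximum degree three; for a phenylene one reads off the same statement from its hexagonal squeeze.

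Next I would feed this partition into Lemma \ref{pomoc}. Using $t(G) = b(G) + 2$ together with $2b(G) + a(G) = s(G) - 1$, the partition collapses to
$$h(G) = \big(b(G)+2\big) + l(G) + a(G) + b(G) = \big(2b(G)+a(G)\big) + l(G) + 2 = s(G) + l(G) + 1,$$
so that $s(G) = h(G) - l(G) - 1$. Since $l(G) \geq 0$, this yields $s(G) \leq h(G) - 1$ immediately, with equality precisely when $G$ has no linear hexagon.

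It then remains to show that every graph in $\mathcal{B}_h \cup \mathcal{P}_h$ satisfies $l(G) = 0$, which I would prove by induction on the number of applied extensions. In the base case $G$ has exactly two hexagons, both terminal, so $l(G)=0$. For the inductive step I would verify that neither extension creates a linear hexagon: each extension acts at a terminal hexagon $h_0$, turning it into a branched hexagon (Extension 1) or an angular hexagon (Extension 2); the newly attached hexagons are each adjacent only to $h_0$ and are therefore terminal; and every remaining hexagon keeps its adjacency count, hence its type. Thus $l$ stays $0$ throughout, and $s(G) = h(G)-1$ follows from the identity above.

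The only genuinely delicate point is the bookkeeping in the inductive step: one must confirm that modifying $h_0$ leaves the type of its old neighbor unchanged and that no new linear hexagon is introduced. This is exactly where the precise wording of the extensions is used — in particular the stipulation in Extension 2 that the two degree-two vertices of $h_0$ become \emph{adjacent}, which forces $h_0$ to turn angular rather than linear. Once this is checked, the remainder of the argument is purely formal.
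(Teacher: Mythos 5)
Your proof is correct, but it follows a genuinely different route from the paper. The paper argues by induction on the number of hexagons: starting from the two-hexagon graph with $s=1$, each added hexagon increases the number of segments by at most one, which gives $s(G)\leq h(G)-1$; the equality claim is then read off from the fact that Extension~1 raises $s$ by two and Extension~2 by one. You instead combine the type partition $h(G)=t(G)+l(G)+a(G)+b(G)$ with the two identities of Lemma~\ref{pomoc} to obtain the exact relation $s(G)=h(G)-l(G)-1$, and then show by a structural induction that the graphs in ${\mathcal{B}}_h\cup{\mathcal{P}}_h$ contain no linear hexagon. Your algebra checks out (and agrees with the running example, where $h=6$, $l=1$, $s=4$), the partition is valid for $h(G)\geq 2$ since every hexagon is adjacent to one, two, or three other inner faces, and the bookkeeping in the inductive step is handled correctly. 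What your approach buys is an exact identity rather than an inequality, and with it a clean characterization of the equality case ($s(G)=h(G)-1$ if and only if $l(G)=0$), which is strictly more information than the lemma asserts; it also makes the second claim a purely combinatorial statement about the absence of linear hexagons in the constructed families. The cost is a dependence on Lemma~\ref{pomoc} and on the four-type partition, whereas the paper's induction is self-contained and shorter. Both arguments are sound; yours is arguably the more transparent of the two at the equality step, where the paper's phrasing ("the number of segments increases by two") is left somewhat informal.
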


\begin{proof}
If $G$ has exactly two hexagons, then $s(G)=1$ and the statement holds. Whenever we add exactly one hexagon to a graph, the number of segments stays the same or increases by one. Therefore, by induction we obtain $s(G) \leq h(G)-1$ for any $G$.

From the definition of the families ${\mathcal{B}}_h$ and ${\mathcal{P}}_h$ it follows that whenever we add two hexagons to a smaller graph, the number of segments increases by two, and whenever we add one hexagon, the number of segments increases by one. Therefore, the equality holds for these graphs. \qed
\end{proof}

\noindent
Finally, everything is prepared for the following theorem.
\begin{theorem} 
If $G$ is a catacondensed benzenoid graph (or a phenylene) with $h$ hexagons, $h \geq 2$, and if $G'$ is an arbitrary graph from ${\mathcal{B}}_h$ (or ${\mathcal{P}}_h$), then $W_p(G) \leq W_p(G')$ and the equality holds if and only if $G \in {\mathcal{B}}_h$ (or $ G \in {\mathcal{P}}_h$).
\end{theorem}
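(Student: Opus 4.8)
The plan is to reduce the statement to a purely combinatorial optimization over the hexagon‑type counts, and then to an inductive structural characterization. By Theorem \ref{Wie_pol}, for a fixed number of hexagons $h$ the Wiener polarity index equals a constant depending only on $h$ plus $s(G)+b(G)$; explicitly $W_p(G)=9h+s(G)+b(G)-7$ in the benzenoid case and $W_p(G)=13h+s(G)+b(G)-11$ in the phenylene case. Since the coefficient of $s(G)+b(G)$ is $1$, maximizing $W_p(G)$ over all graphs with $h$ hexagons is equivalent to maximizing the quantity $s(G)+b(G)$, and this is what I would estimate.

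Next I would translate the objective into the four hexagon‑type counts. Writing $h=t(G)+b(G)+a(G)+l(G)$ and using Lemma \ref{pomoc} in the form $t(G)=b(G)+2$ and $s(G)=2b(G)+a(G)+1$, I obtain $s(G)+b(G)=3b(G)+a(G)+1$ together with the constraint $2b(G)+a(G)=h-2-l(G)\le h-2$. It then remains to maximize $3b+a$ over nonnegative integers satisfying $2b+a\le h-2$. Since for a fixed value of $2b+a$ the quantity $3b+a$ grows with $b$, the optimum is attained only when $l(G)=0$ and $b$ is as large as possible; a short parity analysis gives the maximum $s+b=\frac{3h-4}{2}$ for even $h$, attained only at $(b,a,l)=\big(\tfrac{h-2}{2},0,0\big)$, and $s+b=\frac{3h-5}{2}$ for odd $h$, attained only at $(b,a,l)=\big(\tfrac{h-3}{2},1,0\big)$. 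This yields $W_p(G)\le W_p(G')$ once I check that the graphs in $\mathcal{B}_h$ (resp. $\mathcal{P}_h$) realize the extremal parameters: Lemma \ref{st_segmentov} gives $s=h-1$ (equivalently $l=0$), while a direct count along the construction shows each Extension 1 raises $b$ by one and Extension 2 produces the single angular hexagon, matching the optimal $(b,a)$.

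For the equality characterization I would prove that a graph attains the bound if and only if it has exactly the extremal type‑counts, and that the latter forces membership in the corresponding family. The condition $l(G)=0$ says there are no linear hexagons, so the dualist tree $T$ coincides with the reduced tree and each of its edges is a segment; in the even case $a(G)=0$ additionally forces every internal vertex of $T$ to have degree $3$, while in the odd case $T$ has a single degree‑$2$ vertex, namely the unique angular hexagon. I would then induct on $h$ using a longest‑path argument in $T$: if $v_0\cdots v_k$ is a longest path and $v_1$ is branched, then its neighbours other than $v_2$ are both leaves, so deleting the two corresponding terminal hexagons reverses Extension 1 and reduces $h$ by two; in the odd case, if instead $v_1$ is the angular vertex, it is adjacent to the leaf $v_0$, and deleting that terminal hexagon reverses Extension 2, reducing to the even case with $h-1$ hexagons. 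At each step the smaller graph again has extremal parameters, so by the induction hypothesis it lies in the appropriate family and $G$ is recovered from it by the corresponding extension.

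The routine parts are the affine reduction of the first paragraph and the integer optimization with its parity split. The genuine obstacle is the final paragraph: showing that the extremal type‑counts pin the graph down up to the extension process. The delicate point is to verify that removing the two terminal hexagons (or the single one in the angular case) yields a valid smaller catacondensed benzenoid graph or phenylene whose type‑counts are again extremal — in particular that converting a branched hexagon into a terminal one creates no new linear or angular hexagon elsewhere and leaves the unique angular hexagon untouched in the odd case — so that the induction hypothesis applies and the reconstruction genuinely lands in $\mathcal{B}_h$ (resp. $\mathcal{P}_h$).
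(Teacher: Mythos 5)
Your proposal is correct, and its core mechanism differs from the paper's in a way worth noting. The paper argues in two separate stages: it first claims that the graphs in ${\mathcal{B}}_h \cup {\mathcal{P}}_h$ maximize the number of branched hexagons (identifying, for odd $h$, the additional maximizers ${\mathcal{B}}_h'$, ${\mathcal{P}}_h'$ built with Extension~3), and then invokes Lemma~\ref{st_segmentov} to show that among these only ${\mathcal{B}}_h$, ${\mathcal{P}}_h$ also maximize the number of segments. This leaves implicit the possibility of a trade-off (a graph with smaller $b$ but larger $s$ beating the claimed maximizers of $s+b$), and it rests on assertions of the form ``by the construction, obviously.'' You instead eliminate the trade-off at the outset: using Lemma~\ref{pomoc} you rewrite $s(G)+b(G)=3b(G)+a(G)+1$ subject to $2b(G)+a(G)=h-2-l(G)$, and solve the resulting integer program exactly, obtaining the unique optimal type-vector $(b,a,l)$ in each parity class. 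This makes the inequality airtight and reduces the equality case cleanly to the single structural claim that the extremal type-counts force membership in ${\mathcal{B}}_h$ (resp.\ ${\mathcal{P}}_h$), which you then establish by a longest-path induction in the dualist tree that reverses Extension~1 (or Extension~2 once, in the odd case). That structural step is exactly the point the paper waves through with ``it is clear,'' so your treatment is, if anything, more complete; the only items you should still spell out are the base cases $h=2,3$ and the routine check that deleting the leaf hexagons preserves extremality of the type-counts, which you have correctly flagged.
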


\begin{proof}
By Theorem \ref{Wie_pol}, the maximum value of the Wiener polarity index is attained by a graph $G$ which attains also the maximum value with respect to the number $s(G) + b(G)$. 

By the construction, the graphs in ${\mathcal{B}}_h \cup{\mathcal{P}}_h$ obviously have the maximum number of branched hexagons among all the catacondensed benzenoid graphs or phenylenes with $h$ hexagons. If $h$ is even, these graphs are the only graphs attaining the upper bound with respect to the number of branched hexagons. If $h$ is odd, the upper bound with respect to the number of branched hexagons is attained also by some other graphs. To describe them, we first introduce Extension 3.

\textbf{Extension 3:} \textit{Let $G_0$ be a catacondensed benzenoid graph (or a phenylene) and let $h_0$ be an arbitrary terminal hexagon of $G_0$. We denote by $G$ a graph obtained from $G_0$ by attaching exactly one new hexagon to $h_0$ such that $h_0$ becomes a linear hexagon (and in the case of phenylenes, we also add a quadrilateral between $h_0$ and the new hexagon). See Figure \ref{dodas_ravno}.}
\begin{figure}[h!] 
\begin{center}
\includegraphics[scale=0.6]{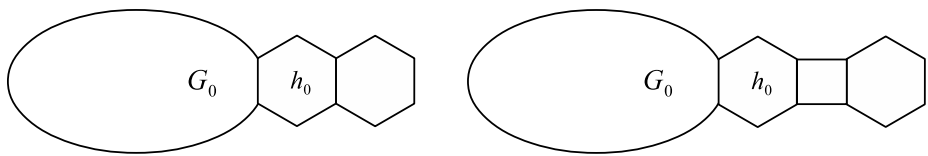}
\end{center}
\caption{\label{dodas_ravno} A catacondensed benzenoid graph (or a phenylene) $G$ obtained from $G_0$ by Extension 3.}
\end{figure}

If $h$ is an odd number, then ${\mathcal{B}}_h'$ (or ${\mathcal{P}}_h'$) is the set of all the graphs that can be obtained from the catacondensed benzenoid graph with exactly two hexagons (or from the phenylene with exactly two hexagons) by performing Extension 1 exactly $\left(\frac{h-1}{2}-1\right)$ times and performing Extension 3 exactly once (in an arbitrary order).
\noindent
It is clear that for any odd $h$ the upper bound with respect to the number of branched hexagons is attained exactly by the graphs in ${\mathcal{B}}_h  \cup{\mathcal{B}}_h'$ (or ${\mathcal{P}}_h \cup{\mathcal{P}}_h'$).

Moreover, by Lemma \ref{st_segmentov} the graphs in ${\mathcal{B}}_h \cup{\mathcal{P}}_h$ also have the maximum possible number of segments. On the other hand, by applying the same reasoning as in the proof of Lemma \ref{st_segmentov}, we deduce that the graphs in ${\mathcal{B}}_{h}'$ (or ${\mathcal{P}}_{h}'$) do not attain the upper bound with respect to the number of segments since the number of segments does not increase when Extension 3 is performed. Therefore, it is clear that $G$ attains the maximum value with respect to the Wiener polarity index if and only if $G \in {\mathcal{B}}_h$ (or $ G \in {\mathcal{P}}_h$). \qed 
\end{proof}

\section{Concluding remarks}

In the paper we have generalized a formula for calculating the Wiener polarity index of a graph. The obtained result can be used for many (molecular) graphs since the conditions in the main theorem do not prohibit $4$-cycles in a considered graph. As an example of the main result, the closed formulas for the Wiener polarity index are calculated for phenylenes and recalculated for catacondensed benzenoid graphs. Moreover, the closed formulas are used to characterize the graphs with the minimum and the maximum Wiener polarity index. Regarding the future work, it would be interesting to generalize the main result such that it could be used even for graphs sufficing weaker conditions. However, in such a case the formula would probably become much more complicated.

\section*{Acknowledgment} 

\noindent The author was financially supported by the Slovenian Research Agency (research core funding No. P1-0297 and J1-9109).


\noindent

\baselineskip=17pt

\end{document}